\newcommand\R{\mathbb{R}}
\newcommand\cB{\mathcal{B}}
\newcommand\cC{\mathcal{C}}
\newtheorem{theorem}{Theorem}
\newtheorem{lemma}[theorem]{Lemma}
\newtheorem{definition}[theorem]{Definition}
\theoremstyle{remark}
\newtheorem{rem}[theorem]{Remark}
\numberwithin{equation}{section}
\numberwithin{theorem}{section}
\newcommand{\astq}{q_\ast}
\def\norm#1#2{\|#1\|_{L^#2}}
\def\dfrac#1#2{{\displaystyle\frac{#1}{#2}}}
\newcommand\X{\mathcal{X}}
\newcommand\Y{\mathcal{Y}}
\begin{document}


\title[Aggregation equations ]{Blow-up versus global existence of solutions \\
to  aggregation equations}

\author{Grzegorz Karch}
\address{
 Instytut Matematyczny, Uniwersytet Wroc\l awski,
 pl. Grunwaldzki 2/4, 50-384 Wroc\-\l aw, POLAND}
\email{grzegorz.karch@math.uni.wroc.pl}
\urladdr{http://www.math.uni.wroc.pl/~karch}

\author{Kanako Suzuki}
\address{
Institute for International Advanced Interdisciplinary Research,
Tohoku University,
6-3 Aramaki-aza-Aoba, Aoba-ku, Sendai 980-8578, JAPAN}
\email{kasuzu-is@m.tains.tohoku.ac.jp}

\date{\today}

\thanks{
This work was partially supported %
by the European Commission Marie Curie Host Fellowship for the Transfer of 
Knowledge
``Harmonic Analysis, Nonlinear Analysis and Probability''
MTKD-CT-2004-013389, %
by the Polish Ministry of Science grant N201 022 32/0902 and by %
Japan-Poland Research Cooperative Program (2008-2009).}

\begin{abstract} 
\noindent A class of nonlinear viscous transport equations 
describing aggregation phenomena in biology 
is considered. 
Optimal  conditions on an interaction potential are obtained 
which lead  either to 
 the existence or to the nonexistence of 
global-in-time  solutions. 
\end{abstract}

\keywords{
 nonlocal parabolic equations; blowup of 
solutions; chemotaxis; moment method}
\bigskip

\subjclass[2000]{ 35Q, 35K55, 35B40}

\maketitle

\section{Introduction}

The following Cauchy problem for the heat equation  corrected by the nonlocal and nonlinear  transport term 
\begin{align}
&u_t = \Delta u - \nabla \cdot \big(u(\nabla K \ast u)\big), \quad x\in \R^n, t>0,\label{eq1} \\
&u(x, 0) =u_0 (x) \label{eq1-ini}
\end{align}
has been used to describe  a collective motion and aggregation phenomena in biology and
mechanics of continuous media.
Here, the unknown function $u = u(x, t)\geq 0$ 
is either the population density of  a species or the density of particles in a granular media.
From a mathematical point of view, equation~\eqref{eq1} can be considered as either a viscous conservation law with a nonlocal (quadratic) nonlinearity or 
a viscous transport equation with nonlocal velocity, and its character depends strongly on the properties of the given kernel $K$.
If this kernel is radially symmetric,
the {\em nonincreasing} function  $K(r)$, $r=|x|$, corresponds to the attraction of particles 
while {\em nondecreasing} one is repulsive.
 
Let us first emphasize that 
 problem \eqref{eq1}--\eqref{eq1-ini} contains, as a particular case, the (simplified) Patlak-Keller-Segel system for chemotaxis 
describing the  motion of cells, usually bacteria or amoebae, that are attracted by a chemical substance and are
able to emit it, see {\it e.g.} \cite{Murray} for a general introduction to chemotaxis.
This parabolic-elliptic system
has  the form
\begin{align}
u_t &= \nabla \cdot (\nabla u - u \nabla v),\quad x\in \R^n, t>0, \label{PKS1} \\
 0&=\Delta v - \alpha v + u,\label{PKS2} 
\end{align}
where $\alpha > 0$ is a given constant. In this model, the function $u=u(x,t)$ represents the cell density and $v=v(x,t)$
is a concentration of the chemical attractant which induces a drift force. 
Computing $v$ from equation \eqref{PKS2}
and substituting it into the transport term in equation \eqref{PKS1}, we immediately obtain equation \eqref{eq1}
with the kernel $K=K(x)$ given by the fundamental solution of the operator $-\Delta +\alpha $ on $\R^n$.
In this case, the function $K$ is called the Bessel potential and it is
singular at the origin if $n \ge 2$, more precisely, it  
satisfies
$
 |\nabla  K (x)| \sim {|x|^{-n+1}} 
$ 
as $|x| \to 0$
and it decays exponentially when $|x|\to\infty$, see \cite[Ch.5.3]{Stein} for
more detail. Hence, when $n \ge 2$, we see that $\nabla  K\in L^{q'}(\R^n)$
for every  $q'<\frac{n}{n-1}$ and $\nabla  K\notin L^{p}(\R^n)$ if $p\geq \frac{n}{n-1}$.
On the other hand, for $n=1$, this fundamental solution is given explicitely by $K(x)=\exp(-\sqrt\alpha |x|)$, hence 
$\nabla  K\in L^{q'}(\R)$ for all $q'\in [1,\infty]$.
We refer the reader to the recent 
 works  \cite{B3,BW,BDP,C-P-Z, K99,Kozono-Sugiyama,Li-Rod1,N2000} (this list is by no mean exhaustive)
and to the references therein for mathematical results on 
 the Patlak-Keller-Segel system \eqref{PKS1}--\eqref{PKS2}.

In this work, we are motivated by recent results on
the
local and global existence of solutions to the {\it inviscid} aggregation equation
\begin{align}
u_t + \nabla \cdot (u (\nabla K \ast u)) = 0, \label{intro-eq02} 
\end{align}
which has been thoroughly studied in \cite{Lau1} under some additional hypotheses on the kernel, see also \cite{BB,BL1}. 
In particular, kernels that are smooth (not  singular) at the origin $x=0$ lead to the global in time existence of solutions, see {\it e.g.} \cite{BL1,BCL1,Lau1}. 
Non-smooth  kernels (and $C^1$ off the origin, like $K(x)=e^{-|x|}$) may lead to blowup of solutions 
either in finite or infinite time \cite{BB,BCL1,BL1,Lau1,Li-Rod1}. 
Singular kernels like potential type (arising in chemotaxis theory, {\it cf. e.g.}~ \cite{BK09, BKL09} and the references therein) 
$ 
K(x)=c|x|^{\beta-d}, 
$
with  $1<\beta<d$ , usually lead to finite time blowup of 
all nonnegative  solutions, see {\it e.g.}~\cite{BKL09}.
 
In particular, in the recent work  by Bertozzi {\it et al.}~\cite{BCL1} on  the inviscid aggregation equation \eqref{intro-eq02},  
the kernel 
$K$  is assumed to be radially symmetric $K(x) = k(|x|)$ with the
function $k(r)$ increasing in $r$, smooth away from zero and bounded from 
below.  The authors of  \cite{BCL1} obtained  natural conditions on $K$ such that all solutions to equation \eqref{intro-eq02},
supplemented with bounded, nonnegative, and compactly
supported initial data,
either
blowup in finite time or exist for all $t > 0$.
More precisely, they introduce the quantity
\begin{align}
 \int_0^1 \frac{1}{k^\prime (r)}\, dr \label{intro-osgood}
\end{align}
and show that 
if \eqref{intro-osgood} is finite, the solution of  \eqref{intro-eq02} blows up in finite
time. On the other hand, if \eqref{intro-osgood} is infinite, 
the global-in-time   solution  to \eqref{intro-eq02} is constructed.

 The purpose of this paper is to describe an analogous influence  of 
singularities  of the kernel $\nabla K$ on the existence and nonexistence of 
global-in-time  solutions of the ``viscous'' problem \eqref{eq1}-\eqref{eq1-ini}. 
 Roughly speaking, our results  can be summarized as follows. 
If $\nabla K \in L^{q'}(\R^n)$ for some $q'\in [1,\infty]$, 
we can always construct local-in-time solutions to \eqref{eq1}--\eqref{eq1-ini}, however, 
some additional regularity assumptions on the initial conditions have to be imposed if $\nabla K$ 
is too singular in the scale of the $L^p$-spaces.
Next, we show  that the initial value problem \eqref{eq1}--\eqref{eq1-ini} with 
a mildly singular interaction  kernel, namely  $\nabla K \in L^{q'}(\R^n)$ for some $q'\in (n,\infty]$,
has a global-in-time solutions for any nonnegative and integrable initial datum \eqref{eq1-ini}. 
On the other hand, 
there are strongly singular kernels, 
such that some solutions of 
problem \eqref{eq1}--\eqref{eq1-ini} blowup in finite time. 
In particular,  we show that  the following behavior
$|\nabla K(x)|\sim {|x|^{-1}}$, as $|x|\to 0$,  appears to be 
 critical for the existence and the nonexistence of global-in-time solutions
to problem \eqref{eq1}--\eqref{eq1-ini}. 
In the next section, we state and discuss our results more precisely.

To conclude this introduction, we would like to mention that completely analogous results 
can be obtained
for the 
aggregation equation 
with the fractional 
dissipation 
\begin{align}
u_t + \nabla \cdot (u (\nabla K \ast u)) = -\nu (-\Delta)^{\gamma /2}u, \label{intro-eq03}
\end{align}
where $\nu > 0$ and $\gamma \in (1, 2]$. 
Some results in this direction, mainly  for the kernel $K$ either of the form  
$K (x) = e^{-|x|}$  or given by the Bessel potential, were  published in 
\cite{BK09, BKL09, Li-Rod1}.


\subsection*{Notation}
Throughout this paper, we denote the norm of the usual Lebesgue space $L^p (\R^n)$, $1 \le p \le +\infty$, by $\| \cdot
\|_{L^p}$. The constants (always independent
of $x$, $t $, and $u$) will be
denoted by the
same letter $C$, even if they may vary from line to line.
Sometimes, we write,  {\it e.g.},  $C=C(T)$ when we want to
emphasize
the dependence of $C$ on a parameter~$T$. We write $f(x)\sim g(x)$ if there is a constant $C>0$ such that
$C^{-1}g(x)\leq f(x)\leq Cg(x)$.


\section{Results and comments}

We begin by introducing terminology systematically used in this work.

\begin{definition}\label{def:ker}
\rm The interaction kernel $K:\R^n\to \R$ is called
\begin{itemize}
\item[i.]
 {\it mildly singular}  if 
$\nabla K \in L^{q'}(\R^n)$ for some $q'\in (n,\infty]$;
\item[ii.]
{\it strongly singular} if 
$\nabla K \in L^{q'}(\R^n)$ for some $q'\in [1,n]$ and $\nabla K \notin L^{p}(\R^n)$ for every $p>n$.
\end{itemize}
\end{definition}

Notice that any function $\nabla K$ satisfying $|\nabla K(x)|\sim |x|^{-a}$ as $|x|\to 0$ and rapidly decreasing if $|x|\to\infty$
is mildly singular in the sense stated above if $a<1$ and strongly
singular for $a\geq 1$. %
Hence,  the Bessel potential  $K$  (appearing in the case of the chemotaxis system)
is strongly singular when $n \ge
2$  and mildly singular for $n=1$.

In order to describe an influence of  singularities  of the function $\nabla K$  on the existence/nonexistence of
solutions to the initial value problem  \eqref{eq1}--\eqref{eq1-ini}, we discuss separately conditions leading to the local-in-time 
existence of solutions, 
their global-in-time existence, as well as the blowup of solutions in finite time .

\subsection*{Local existence of solutions}
First,  we show that the critical exponent $q'=n$ 
from Definition \ref{def:ker}
appears already in the construction of local-in-time solutions to \eqref{eq1}--\eqref{eq1-ini}
with kernels satisfying $\nabla K \in L^{q^\prime} (\R^n)$.
Notice that for strongly singular kernels  we have 
to consider  more regular (in the sense of the $L^p$-spaces) initial
conditions. %
In the following two theorems, the quantity $n/(n-1)$ stands for $+\infty$ if $n =
1$. %
\begin{theorem}[Mildly singular kernels]\label{local-th1}
Assume that $\nabla K \in L^{q^\prime} (\R^n)$ with $ q^\prime \in
 (n,+\infty]$.  
Let $q\in \big[1,\frac{n}{n-1}\big)$ satisfy $1/q + 1/q^\prime = 1$.
For every $u_0 \in L^1 (\R^n)$
 there exists 
$T = T(\|u_0\|_{L^1}, \|\nabla K \|_{L^{q^\prime}}) > 0$ and the
 unique mild solution of problem \eqref{eq1}--\eqref{eq1-ini} in the space
$$
 \X_T= C([0, T],\ L^1 (\R^n)) \cap C((0, T],\ L^q (\R^n))
$$
supplemented with the norm $\|u\|_{\X_T} \equiv \sup_{0 \le t\leq T} \|u\|_{L^1} + \sup_{0 \le
 t\leq T} \left(t^{\frac{n}{2}(1-\frac{1}{q})}\|u\|_{L^q} \right)$.
\end{theorem}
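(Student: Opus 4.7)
\medskip

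\noindent\textbf{Proof plan.} The plan is to run a standard contraction-mapping argument in $\X_T$ applied to the Duhamel (mild) formulation
\[
u(t) \;=\; e^{t\Delta}u_0 \;-\; \int_0^t \nabla e^{(t-s)\Delta}\!\cdot\!\bigl(u(s)\,(\nabla K\ast u(s))\bigr)\,ds \;=:\; \Phi(u)(t),
\]
on a closed ball $B_R=\{u\in\X_T:\|u\|_{\X_T}\le R\}$ with $R\sim\|u_0\|_{L^1}$ and $T$ small. The linear part is easy: the heat-semigroup estimates
\[
\|e^{t\Delta}f\|_{L^p}\le C\,t^{-\frac{n}{2}(\frac{1}{r}-\frac{1}{p})}\|f\|_{L^r},\qquad \|\nabla e^{t\Delta}f\|_{L^p}\le C\,t^{-\frac12-\frac{n}{2}(\frac{1}{r}-\frac{1}{p})}\|f\|_{L^r}
\]
together with strong continuity of $e^{t\Delta}$ on $L^1$ give $\|e^{t\Delta}u_0\|_{\X_T}\le C\|u_0\|_{L^1}$ and $e^{t\Delta}u_0\in C([0,T],L^1)\cap C((0,T],L^q)$.

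The core of the argument is the estimate of the nonlinear term. Young's convolution inequality combined with $1/q+1/q'=1$ yields
\[
\|\nabla K\ast u\|_{L^{q'}}\le \|\nabla K\|_{L^{q'}}\|u\|_{L^1},\qquad \|\nabla K\ast u\|_{L^\infty}\le \|\nabla K\|_{L^{q'}}\|u\|_{L^q},
\]
so H\"older's inequality applied in two ways gives
\[
\|u(\nabla K\ast u)\|_{L^1}\le \|\nabla K\|_{L^{q'}}\|u\|_{L^1}\|u\|_{L^q}.
\]
Setting $\alpha:=\tfrac{n}{2}(1-\tfrac{1}{q})=\tfrac{n}{2q'}$ and inserting this into the Duhamel formula, one gets, after using the semigroup bounds with target $L^1$ and $L^q$ and source $L^1$,
\[
\|\Phi(u)(t)\|_{L^1}\le \|u_0\|_{L^1}+C\|\nabla K\|_{L^{q'}}\|u\|_{\X_T}^2\int_0^t(t-s)^{-1/2}s^{-\alpha}\,ds,
\]
\[
t^{\alpha}\|\Phi(u)(t)\|_{L^q}\le C\|u_0\|_{L^1}+C\|\nabla K\|_{L^{q'}}\|u\|_{\X_T}^2\,t^{\alpha}\!\int_0^t(t-s)^{-1/2-\alpha}s^{-\alpha}\,ds.
\]
Both Beta-type time integrals are finite precisely when $\alpha<1/2$, i.e.\ $q'>n$, and they are bounded by a constant times $t^{1/2-\alpha}$ and $t^{1/2-\alpha}$ respectively. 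Hence
\[
\|\Phi(u)\|_{\X_T}\le C\|u_0\|_{L^1}+C\,T^{1/2-\alpha}\|\nabla K\|_{L^{q'}}\|u\|_{\X_T}^2,
\]
and choosing $R=2C\|u_0\|_{L^1}$ and $T=T(\|u_0\|_{L^1},\|\nabla K\|_{L^{q'}})$ so small that $CT^{1/2-\alpha}\|\nabla K\|_{L^{q'}}R<1/2$ makes $\Phi$ map $B_R$ into itself.

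A virtually identical computation, using
\[
u(\nabla K\ast u)-v(\nabla K\ast v)=(u-v)(\nabla K\ast u)+v(\nabla K\ast(u-v)),
\]
shows that $\Phi$ is a contraction on $B_R$, yielding existence of a unique fixed point $u\in\X_T$. Continuity of $u$ with values in $L^1$ (up to $t=0$) and in $L^q$ (on $(0,T]$) follows because $e^{t\Delta}u_0$ already has these properties and the Duhamel integral inherits them from the time-integrable kernels appearing above via a standard dominated-convergence argument. The main obstacle, as the computation makes clear, is precisely the integrability of $s^{-\alpha}(t-s)^{-1/2-\alpha}$: it is the requirement $\alpha<1/2$, equivalent to $q'>n$, that singles out the \emph{mildly singular} range of Definition~\ref{def:ker} and forces the stronger regularity assumption on $u_0$ when $\nabla K$ is more singular.
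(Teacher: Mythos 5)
Your proof is correct and follows essentially the same route as the paper: the same Duhamel/contraction setup in $\X_T$, the same H\"older--Young bound $\|u(\nabla K\ast v)\|_{L^1}\le\|\nabla K\|_{L^{q'}}\|u\|_{L^q}\|v\|_{L^1}$, and the same Beta-function time integrals producing the factor $T^{1/2-\alpha}$ with $\alpha=\tfrac{n}{2q'}<\tfrac12$ (the paper bounds the $L^q$ component via $\|\nabla K\ast v\|_{L^\infty}$ and the $L^q\to L^q$ gradient estimate rather than your $L^1\to L^q$ smoothing, but the exponents come out identical). Two small points to tighten: the contraction gives uniqueness only in the ball $B_R$, whereas the theorem asserts uniqueness in all of $\X_T$, which needs the extra Gronwall-type argument the paper alludes to; and your first time integral $\int_0^t(t-s)^{-1/2}s^{-\alpha}\,ds$ is in fact finite for all $\alpha<1$ --- it is the positivity of the exponent $1/2-\alpha$ (and the convergence of the second integral) that genuinely requires $q'>n$.
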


\begin{theorem}[Strongly singular kernels]\label{local-th2}
Assume that $\nabla K \in L^{q^\prime} (\R^n)$ with $ q^\prime \in [1,n]$.
 Let $q \in \big[\frac{n}{n-1},\infty\big]$ satisfy $1/q + 1/q^\prime = 1$.
For every $u_0 \in L^1 (\R^n) \cap L^q
 (\R^n)$, there exists 
$T = T(\|u_0\|_{L^1}, \|u_0\|_{L^q}, \|\nabla K \|_{L^{q^\prime}}) > 0$ and the
 unique mild solution of problem \eqref{eq1}--\eqref{eq1-ini} in the space
\[
 \Y_T= C([0, T],\ L^1 (\R^n)) \cap C([0, T],\ L^q (\R^n))
\]
supplemented with the norm $\|u\|_{\Y_T} \equiv \sup_{0 \le t\leq T} \|u\|_{L^1} + \sup_{0 \le
 t\leq T} \|u\|_{L^q}$.
\end{theorem}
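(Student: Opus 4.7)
The plan is to set up a Banach fixed point argument on the Duhamel formulation
\[
u(t) = e^{t\Delta}u_0 - \int_0^t \nabla e^{(t-s)\Delta}\cdot\bigl(u(s)\,(\nabla K\ast u(s))\bigr)\,ds =: \Phi(u)(t),
\]
but now inside the space $\Y_T$ without any time weight, taking advantage of the extra hypothesis $u_0\in L^q$. The linear part is handled by the contraction of the heat semigroup on $L^p$, giving $\|e^{t\Delta}u_0\|_{\Y_T}\le \|u_0\|_{L^1}+\|u_0\|_{L^q}$ for every $T>0$.

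For the bilinear part I would rely on the standard gradient heat kernel estimate $\|\nabla e^{\tau\Delta}f\|_{L^p}\le C\tau^{-1/2-\frac{n}{2}(1/r-1/p)}\|f\|_{L^r}$. To control the nonlinear term in $L^1$, I take $r=p=1$ and use H\"older together with Young's convolution inequality (using exactly the duality $1/q+1/q'=1$):
\[
\|u(\nabla K\ast u)\|_{L^1}\le \|u\|_{L^q}\,\|\nabla K\ast u\|_{L^{q'}}\le \|\nabla K\|_{L^{q'}}\,\|u\|_{L^q}\,\|u\|_{L^1}.
\]
To control it in $L^q$, I take $r=p=q$ and write
\[
\|u(\nabla K\ast u)\|_{L^q}\le \|u\|_{L^q}\,\|\nabla K\ast u\|_{L^\infty}\le \|\nabla K\|_{L^{q'}}\,\|u\|_{L^q}^2,
\]
again using $1/q+1/q'=1$ in Young's inequality (this is precisely what makes the choice $q\in[n/(n-1),\infty]$ natural). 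In both cases the time integral reduces to $\int_0^t(t-s)^{-1/2}ds = 2\sqrt{t}$, yielding
\[
\|\Phi(u)-e^{t\Delta}u_0\|_{\Y_T}\le C\sqrt{T}\,\|\nabla K\|_{L^{q'}}\,\|u\|_{\Y_T}^{2}.
\]

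A completely analogous computation on $u(\nabla K\ast u)-v(\nabla K\ast v)=(u-v)(\nabla K\ast u)+v(\nabla K\ast(u-v))$ gives
\[
\|\Phi(u)-\Phi(v)\|_{\Y_T}\le C\sqrt{T}\,\|\nabla K\|_{L^{q'}}\bigl(\|u\|_{\Y_T}+\|v\|_{\Y_T}\bigr)\|u-v\|_{\Y_T}.
\]
Setting $R=2(\|u_0\|_{L^1}+\|u_0\|_{L^q})$ and choosing $T=T(\|u_0\|_{L^1},\|u_0\|_{L^q},\|\nabla K\|_{L^{q'}})$ small enough so that $2CR\sqrt{T}\|\nabla K\|_{L^{q'}}\le 1/2$, the map $\Phi$ becomes a contraction of the closed ball of radius $R$ in $\Y_T$ into itself, producing a unique mild solution by Banach's fixed point theorem.

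I expect no deep obstacle here: the work of Theorem~\ref{local-th1} is to overcome the absence of $u_0\in L^q$ by exploiting the regularization of $e^{t\Delta}$, whereas in the strongly singular regime the bound $\|\nabla K\ast u\|_{L^{q'}}\le \|\nabla K\|_{L^{q'}}\|u\|_{L^1}$ and $\|\nabla K\ast u\|_{L^{\infty}}\le \|\nabla K\|_{L^{q'}}\|u\|_{L^q}$ both place $\nabla K\ast u$ in spaces dual to $L^q$ and $L^1$, so the argument closes cleanly once $u_0\in L^q$ is assumed. The only slightly delicate point is the strong continuity $t\mapsto u(t)\in L^q$ at $t=0$: for $q<\infty$ it follows from the continuity of the heat semigroup on $L^q$ and from $\sqrt{t}$-Lipschitz behaviour of the Duhamel term established above, while for the endpoint $q=\infty$ (i.e.\ $q'=1$) continuity must be read in the weak-$\ast$ sense or under the additional assumption $u_0\in C_0(\R^n)$, which does not affect uniqueness.
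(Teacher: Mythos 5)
Your proposal is correct and follows essentially the same route as the paper: a contraction argument in the unweighted space $\Y_T$, with the $L^1$-bound of the nonlinearity via H\"older and Young ($\|u(\nabla K\ast u)\|_{L^1}\le\|\nabla K\|_{L^{q'}}\|u\|_{L^q}\|u\|_{L^1}$), the $L^q$-bound via $\|\nabla K\ast u\|_{L^\infty}\le\|\nabla K\|_{L^{q'}}\|u\|_{L^q}$, and the integrable singularity $(t-s)^{-1/2}$ yielding the $\sqrt{T}$ smallness factor. The paper packages the fixed point step through an abstract bilinear lemma rather than an explicit contraction on a ball, but this is only a cosmetic difference.
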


Recall that, as usual, the function $u=u(x,t)$ is called  a {\it mild solution} of \eqref{eq1}--\eqref{eq1-ini} if it satisfies 
the following integral equation
\begin{align}
u(t) = G(\cdot , t)\ast u_0 - \int_0^t \nabla G(\cdot , t-s)* \big(u(\nabla K \ast
 u)\big)(s)\, ds \label{intro-eq1}
\end{align}
with the heat kernel  denoted by 
$ G(x, t) = (4\pi t)^{-n/2}\exp\big(-{|x|^2}/(4t)\big)$. We construct solutions to the integral 
equation \eqref{intro-eq1} using the
Banach contraction principle and, in  the proofs of Theorems \ref{local-th1} and \ref{local-th2}, we emphasize that different estimates are necessary 
according to the singularity of $\nabla K$.

\begin{rem}\label{rem:reg}
In this work, we skip completely questions on regularity of mild solutions to \eqref{eq1}--\eqref{eq1-ini} because there are
standard and well-known results, see {\it e.g.}~the monograph by Pazy \cite{pazy} for more detail.
In particular, by a bootstrap argument, one can show that any mild solution $u\in C([0,T], L^q(\R^n))$ of equation \eqref{intro-eq1}
satisfies
$u\in C^1((0,T], L^q(\R^n))\cap C((0,T], W^{1,q}(\R^n))$ and $u(t)\in W^{2,q}(\R^n)$ for every $t\in (0,T]$.
Moreover, if the initial condition is nonnegative, the same property is shared by the corresponding solution.
\end{rem}

\subsection*{Global existence of solutions}

For  mildly singular kernels, 
 nonnegative solutions to
problem \eqref{eq1}--\eqref{eq1-ini} are global in time.

\begin{theorem}[Mildly singular kernels]\label{global-th1}
Let $q,q'\in [1,\infty]$ satisfy $1/q + 1/q^\prime = 1$.
Assume that $\nabla K \in L^{q^\prime} (\R^n)$ with $ q^\prime \in (n,\infty]$.
 For every $u_0 \in L^1 (\R^n)$ such that 
 $u_0 \ge 0$, there exists the unique global-in-time solution $u$ of problem
 \eqref{eq1}--\eqref{eq1-ini} satisfying
\[
u \in C\big([0, +\infty),\ L^1 (\R^n)\big) \cap C\big((0, +\infty),\ W^{1, q}
 (\R^n)\big) \cap C^1 \big((0, +\infty),\ L^q
 (\R^n)\big).
\]
\end{theorem}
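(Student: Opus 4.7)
The strategy is to promote the local mild solution provided by Theorem~\ref{local-th1} to a global one by exploiting $L^{1}$-mass conservation for nonnegative initial data. The key observation is that the local existence time $T$ in Theorem~\ref{local-th1} depends only on $\|u_{0}\|_{L^{1}}$ and $\|\nabla K\|_{L^{q'}}$; the latter is fixed, and the former will turn out to be time-independent along the solution, so local existence can be iterated with a uniform time step.

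First I would fix the unique mild solution $u\in\X_{T}$ produced by Theorem~\ref{local-th1} and extend it, in the standard way, to its maximal existence interval $[0,T_{\max})$. Before touching mass conservation I would upgrade regularity: Remark~\ref{rem:reg} gives $u\in C^{1}((0,T_{\max}),L^{q})\cap C((0,T_{\max}),W^{1,q})$ with $u(t)\in W^{2,q}(\R^{n})$ for each $t>0$, and the same remark preserves nonnegativity from $u_{0}\geq 0$. Young's convolution inequality then yields $\nabla K\ast u(t)\in L^{\infty}(\R^{n})$ because $\nabla K\in L^{q'}$ and $u(t)\in L^{q}$, so the pointwise equation can be manipulated freely.

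The decisive step is the identity
\begin{equation*}
\|u(t)\|_{L^{1}} \,=\, \|u_{0}\|_{L^{1}} \qquad\text{for every } t\in[0,T_{\max}),
\end{equation*}
which I would prove by a cut-off argument: pick $\chi\in C_{c}^{\infty}(\R^{n})$ with $\chi\equiv 1$ on $B_{1}$ and $\mathrm{supp}\,\chi\subset B_{2}$, set $\chi_{R}(x)=\chi(x/R)$, multiply the classical equation by $\chi_{R}$, integrate over $\R^{n}$, and let $R\to\infty$. After two integrations by parts one is left with two boundary-type terms: the dissipative one $\int u\,\Delta\chi_{R}\,dx$, bounded by $CR^{-2}\|u(t)\|_{L^{1}}$, and the nonlinear flux $\int u(\nabla K\ast u)\cdot\nabla\chi_{R}\,dx$, bounded by $CR^{-1}\|\nabla K\ast u\|_{L^{\infty}}\|u(t)\|_{L^{1}}$. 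Both vanish as $R\to\infty$, giving $\frac{d}{dt}\int u\,dx=0$; together with nonnegativity this yields mass conservation.

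With mass conservation in hand, the global extension is routine: if one had $T_{\max}<\infty$, then for $t_{0}$ close enough to $T_{\max}$ Theorem~\ref{local-th1} applied with initial datum $u(t_{0})$ produces a prolongation of length $T_{\ast}=T(\|u_{0}\|_{L^{1}},\|\nabla K\|_{L^{q'}})$ independent of $t_{0}$, contradicting the definition of $T_{\max}$. Uniqueness of the global solution follows at once from the uniqueness assertion of Theorem~\ref{local-th1}, and the claimed regularity $u\in C((0,\infty),W^{1,q})\cap C^{1}((0,\infty),L^{q})$ is a direct consequence of the parabolic bootstrap referenced in Remark~\ref{rem:reg}. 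The main obstacle in this plan is to justify the cut-off computation rigorously against the fairly mild a~priori regularity of a mild solution; once the $W^{2,q}$-regularity from Remark~\ref{rem:reg} is in place this is standard, but it is the only non-mechanical step in the argument.
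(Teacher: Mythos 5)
Your proposal is correct and follows essentially the same route as the paper: conservation of the $L^1$-norm for nonnegative solutions makes the local existence time of Theorem~\ref{local-th1} uniform, so the solution is continued step by step on intervals of fixed length. The only difference is that the paper obtains mass conservation directly by integrating the Duhamel formula \eqref{intro-eq1} in $x$ (using $\int_{\R^n} G(x,t)\,dx=1$ and $\int_{\R^n}\nabla G(x,t)\,dx=0$), which avoids your cut-off computation and the $W^{2,q}$-regularity upgrade it requires.
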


On the other hand, problem \eqref{eq1}--\eqref{eq1-ini} with  strongly singular kernels has a global-in-time solution under 
suitable smallness assumptions imposed on initial conditions. 
To formulate this result, 
it is more convenient to extend the class of considered kernels
and to assume that $\nabla K\in L^{q^\prime,\infty} (\R^n)$, where $ L^{q^\prime,\infty} (\R^n)$ is the weak $L^{q'}$-space
defined as the space of all measurable functions $f$ such that 
$\sup_{\lambda>0} \lambda \big|\{x: |f(x)|>\lambda \}\big|^{1/q'}<\infty.$

Here, let us  recall the well-known embedding $L^{q^\prime} (\R^n)\subset L^{q^\prime,\infty} (\R^n)$ for all $q'\in [1,\infty)$.
However, it follows immediately from the definition of the $ L^{q^\prime,\infty}$-space that
\begin{equation}\label{imbed}
|\cdot|^{-n/q'}\in L^{q^\prime,\infty}(\R^n)\setminus L^{q^\prime}(\R^n)
\quad\mbox{for}\quad 1<q'\leq n.
\end{equation}
In the following, we are going to use the weak Young inequality
\begin{equation}\label{HLS}
\|\nabla K* f\|_{L^k}\leq C\|\nabla K\|_{L^{q',\infty}}\|f\|_{L^p}
\end{equation}
with $p,q',k\in (1,\infty)$ satisfying $1/p+1/q'=1+1/k$, 
a constant $C=C(n,k,p,q')>0$ and  all $f\in L^p(\R^n)$,
see {\it e.g.}~\cite[Sect. 4.3]{LL} for the proof of  \eqref{HLS}.

\begin{theorem}[Strongly singular kernels]\label{global-th2}
Let $n \ge 2$. Assume that $\nabla K\in L^{q^\prime,\infty}(\R^n)$ with $q'\in (1,n]$.
Denote
\begin{equation}
\astq = \frac{n}{n+1-n/q'}\in [1,n).\label{q*}
\end{equation}
There is an $\varepsilon > 0$ such that for every
 $u_0 \in L^{\astq} (\R^n)$ with $\norm{u_0}{{\astq}} < \varepsilon$, %
there exists a global-in-time mild solution of problem \eqref{eq1}--\eqref{eq1-ini} satisfying $u \in C([0, +\infty),\ L^{\astq}(\R^n))$.
\end{theorem}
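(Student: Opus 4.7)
The plan is to run a Kato--Fujita style fixed-point argument for the integral formulation \eqref{intro-eq1} in a scale-invariant Banach space. The exponent $\astq$ is precisely the critical Lebesgue exponent for the natural parabolic rescaling $u_\lambda(x,t)=\lambda^{n+1-n/q'}u(\lambda x,\lambda^2 t)$, which preserves solutions of \eqref{eq1} when $\nabla K$ is homogeneous of degree $-n/q'$ (the borderline case in $L^{q',\infty}$); indeed $\|u_\lambda(\cdot,0)\|_{L^{\astq}}=\|u_0\|_{L^{\astq}}$. This identifies $L^{\astq}$ as the natural invariant space in which to seek small global solutions.

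Concretely, I would fix the auxiliary exponent $r$ defined by $1/r=1-1/q'+1/(2n)$ and observe that $r\in(\astq,\infty)$, that $(n/2)(1/\astq-1/r)=1/4$, and that the weak-Young exponent $k$ determined by $1/k=1/r+1/q'-1=1/(2n)$ lies in $(1,\infty)$. Solutions will be constructed in the Banach space
\[
\E=\bigl\{u\in C((0,\infty),L^{\astq}(\R^n)\cap L^{r}(\R^n))\bigr\},
\]
endowed with the scale-invariant norm
\[
\|u\|_{\E}:=\sup_{t>0}\|u(t)\|_{L^{\astq}}+\sup_{t>0}t^{1/4}\|u(t)\|_{L^{r}}.
\]

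Two estimates are needed. The linear bound $\|G(\cdot)\ast u_0\|_{\E}\le C\|u_0\|_{L^{\astq}}$ is immediate from standard $L^p$--$L^q$ smoothing of the heat semigroup. For the bilinear map
\[
B(u,v)(t)=-\int_0^t\nabla G(t-s)\ast\bigl(u(\nabla K\ast v)\bigr)(s)\,ds,
\]
I would combine H\"older's inequality (with $1/\astq=1/r+1/k$) with the weak Young inequality \eqref{HLS} taken at $p=r$, which yields
\[
\|u(s)(\nabla K\ast v(s))\|_{L^{\astq}}\le C\|\nabla K\|_{L^{q',\infty}}\|u(s)\|_{L^r}\|v(s)\|_{L^r}\le Cs^{-1/2}\|u\|_{\E}\|v\|_{\E}.
\]
Combined with the heat-kernel bounds $\|\nabla G(t-s)\ast f\|_{L^{\astq}}\le C(t-s)^{-1/2}\|f\|_{L^{\astq}}$ and $\|\nabla G(t-s)\ast f\|_{L^{r}}\le C(t-s)^{-3/4}\|f\|_{L^{\astq}}$, the two components of $\|B(u,v)\|_{\E}$ reduce to the Beta integrals $\int_0^t(t-s)^{-1/2}s^{-1/2}\,ds$ and $\int_0^t(t-s)^{-3/4}s^{-1/2}\,ds$, which converge and produce precisely the powers of $t$ needed to match the norm. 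This gives $\|B(u,v)\|_{\E}\le C\|u\|_{\E}\|v\|_{\E}$.

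With both ingredients in hand, the classical abstract fixed-point lemma applied to $\Phi(u)=G(\cdot)\ast u_0+B(u,u)$ on a small ball in $\E$ produces a unique mild solution whenever $\|u_0\|_{L^{\astq}}<\varepsilon$ with $\varepsilon$ sufficiently small. Strong continuity of $u$ at $t=0$ in $L^{\astq}$ then follows from strong continuity of the heat semigroup on $L^{\astq}$ together with the $L^{\astq}$-estimate on $B(u,v)$, which tends to zero as $t\to 0^+$ by dominated convergence. The main obstacle is essentially bookkeeping: a single auxiliary exponent $r$ must be chosen so that the H\"older--Young--heat chain closes simultaneously for the $L^{\astq}$ and $L^{r}$ components of the $\E$-norm. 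The arithmetic above shows that $1/r=1-1/q'+1/(2n)$ is the unique such choice, and all required constraints ($1<k<\infty$, $r>\astq$, finiteness of the Beta integrals) hold throughout the full range $q'\in(1,n]$.
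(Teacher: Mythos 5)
Your argument is correct and is essentially the paper's own proof: a Meyer/Kato--Fujita fixed point for \eqref{intro-eq1} in the scale-invariant space $C([0,\infty),L^{\astq}(\R^n))$ intersected with a time-weighted auxiliary Lebesgue space, with the quadratic term controlled by H\"older's inequality plus the weak Young inequality \eqref{HLS} and the heat-kernel bounds \eqref{intro-eq3}--\eqref{intro-eq4}, the time integrals reducing to Beta functions with total exponent zero. The only differences are cosmetic: you pin the auxiliary exponent at the endpoint $1/r=1-1/q'+1/(2n)$ so that $u(\nabla K\ast v)$ lands directly in $L^{\astq}$, whereas the paper keeps a free exponent $p$ in the open interval \eqref{sec4-eq1} and an intermediate space $L^r$ with $1/r=2/p+1/q'-1$; also, your appeal to ``dominated convergence'' for continuity at $t=0$ is not quite right as stated (the $L^{\astq}$ bound on $B(u,u)(t)$ is uniform in $t$, not vanishing) and should be replaced by the standard observation that $\sup_{0<s\le t}s^{1/4}\|u(s)\|_{L^{r}}\to 0$ as $t\to 0^+$, a point the paper glosses over entirely.
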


\begin{rem}\label{rem:Bessel}
Notice that, if $n \ge 2$, the Bessel potential $K= K(x)$ 
satisfies
$\nabla  K\in L^{q^\prime,\infty}(\R^n)$
with $q'=\frac{n}{n-1}$. 
\end{rem}

\subsection*{Blowup versus non-blowup of solutions}

Next, we state conditions on  strongly singular kernels under which 
we can observe the blowup in finite time of solutions to the initial value problem
\eqref{eq1}--\eqref{eq1-ini}.
\begin{theorem}[Strongly singular kernels]\label{blow-th1}
Assume that the kernel $K:\R^n\to \R$ satisfies the following conditions:
\begin{itemize}
\item[i.] $K(x)=K(|x|)$ for all $x \in \R^n$,

\item[ii.] there exist $\delta > 0$, $\gamma>0$, and $\cC > 0$ such that 
\[
 \sup_{0 < s \le \delta}sK^\prime (s) \le
      -\gamma  \quad \text{and}\quad |s K^\prime (s)| \le \cC s^2 \quad \text{for all}\quad
      s \ge \delta. 
\]
\end{itemize}
For the initial datum 
 $u_0 \in
 L^1 (\R^n)$ satisfying  $u_0 \ge 0$ and $|x|^2 u_0 \in L^1
 (\R^n)$, denote
\[
 I(0) = \int_{\R^n} |x|^2 u_0 (x)\, dx \qquad \text{and} \qquad M = \int_{\R^n} u_0(x)\,dx= \norm{u_0}{1}.
\]
If 
\[
 M > \frac{2n + 4(\cC+\gamma/\delta^2) I(0)}{\gamma}, 
\]
then there is
 $T = T(M, I(0), \delta,\gamma,\cC ) > 0$  such that the corresponding nonnegative local-in-time solution to the initial value problem 
\eqref{eq1}--\eqref{eq1-ini} cannot be extended beyond interval $[0,T]$.
\end{theorem}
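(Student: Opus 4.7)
The natural approach is the classical moment (virial) method. I would argue by contradiction: suppose the nonnegative local solution $u$ furnished by Theorems \ref{local-th1} or \ref{local-th2} extends past every candidate blowup time and consider the second moment
$$I(t) = \int_{\R^n} |x|^2 u(x,t)\, dx.$$
The first step is to establish that $I(t)$ stays finite and continuously differentiable as long as the solution exists. This is where the far-field bound $|sK'(s)|\le \cC s^2$ from hypothesis (ii) is used in an auxiliary way, since it controls $|\nabla K\ast u|$ by a linear function of $|x|$ plus a constant and prevents the drift from instantly destroying moments. The standard route is to approximate $u_0$ by smooth compactly supported data, work with the resulting smooth solutions (using the regularity recalled in Remark \ref{rem:reg}), and pass to the limit.

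Next I would perform the virial computation. Differentiating $I$ under the integral and integrating by parts twice yields
$$I'(t) = 2nM + 2\int_{\R^n} x\cdot(\nabla K \ast u)(x)\, u(x,t)\, dx.$$
Since $K$ is radial, $\nabla K$ is odd, so symmetrizing the resulting double integral in $(x,y)$ and using $(x-y)\cdot\nabla K(x-y)=|x-y|K'(|x-y|)$ reorganizes the nonlocal term into the key identity
$$I'(t) = 2nM + \int_{\R^n}\!\int_{\R^n} |x-y|\,K'(|x-y|)\,u(x,t)u(y,t)\, dx\, dy.$$

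The third step is to split this double integral over $\{|x-y|\le\delta\}$ and its complement, which is exactly the region decomposition suggested by hypothesis (ii). On the near region the bound $sK'(s)\le -\gamma$ contributes at most $-\gamma\iint_{|x-y|\le\delta} u(x)u(y)\, dx\, dy$, while on the far region the bound $|sK'(s)|\le \cC s^2$ contributes at most $\cC\iint |x-y|^2 u(x)u(y)\, dx\, dy$. Combining these with Chebyshev's inequality
$$\iint_{|x-y|>\delta}\! u(x)u(y)\, dx\, dy \le \frac{1}{\delta^2}\iint |x-y|^2 u(x)u(y)\, dx\, dy$$
and the elementary identity $\iint |x-y|^2 u(x)u(y)\, dx\, dy \le 2MI(t)$ should yield the differential inequality
$$I'(t) \le 2nM - \gamma M^2 + 2\!\left(\cC + \frac{\gamma}{\delta^2}\right)\! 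M\, I(t).$$

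The conclusion then follows from the smallness condition. As long as $I(t)\le I(0)$, the hypothesis $\gamma M > 2n + 4(\cC+\gamma/\delta^2)I(0)$ forces the right-hand side above to be bounded by the strictly negative constant $-2(\cC+\gamma/\delta^2)MI(0)$. Hence $I$ stays below $I(0)$ for all subsequent times and decreases at least linearly, so $I(t)$ would have to cross zero at some explicit $T=T(M,I(0),\delta,\gamma,\cC)$, contradicting $I\ge 0$; the solution therefore cannot extend past $T$. I expect the main obstacle to be the propagation-of-moments step at the very beginning: for the strongly singular kernels this theorem is aimed at, one must verify that the symmetrized double integral is absolutely convergent along the flow and that $I$ can be differentiated classically, which requires a careful approximation argument. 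The algebraic manipulations in the remaining steps are then essentially routine.
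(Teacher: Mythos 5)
Your proposal follows essentially the same moment (virial) argument as the paper's proof: the same differentiation of $I(t)$, symmetrization to $\iint |x-y|K'(|x-y|)u(x)u(y)\,dx\,dy$, splitting at $|x-y|=\delta$, Chebyshev on the far region, and a linear-in-$I$ differential inequality with strictly negative right-hand side that forces $I$ to hit zero in finite time. The only deviation is that you bound $\iint |x-y|^2 u(x)u(y)\,dx\,dy$ by $2MI(t)$ (using the exact expansion, whose cross term $-2\bigl|\int x u\,dx\bigr|^2$ is nonpositive), whereas the paper uses the cruder $|x-y|^2\le 2(|x|^2+|y|^2)$ and gets $4MI(t)$ --- which is why the theorem's hypothesis carries the constant $4$; your sharper bound still yields the stated conclusion, so this is only a cosmetic improvement.
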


\begin{rem}\label{rem:p}
Any interaction kernel $K=K(x)$ satisfying the assumptions of~Theorem \ref{blow-th1} has to be 
strongly singular in the sense of Definition \ref{def:ker}.  Indeed, this follows immediately from the following
inequalities 
\begin{equation*}
\begin{split}
\|\nabla K\|_p^p& =\int_{\R^n} \left|  \frac{x}{|x|} K'(|x|) \right|^p\,dx
= \int_{\R^n} \left|   K'(|x|) \right|^p\,dx
= C \int_{0}^\infty  \left|   K'(s) \right|^p  s^{n-1}\,ds\\
&\geq C \gamma^p \int_{0}^\delta  s^{-p +n-1}\,ds=+\infty
\end{split}
\end{equation*}
for every $p\in [n,\infty)$.
Notice that, for   $n=1$,  these calculations  imply that every one 
dimensional kernel $K$ satisfying the assumptions of Theorem \ref{blow-th1}
satisfies also $\nabla K\notin L^p(\R)$ for each $p\in [1,\infty]$.
\end{rem}

\begin{rem}
If $n\geq2$, one can prove, following the reasoning {\it e.g.}~from  \cite[Lem.~3.1]{Kozono-Sugiyama},
that the Bessel potential satisfies the assumptions of Theorem \ref{blow-th1}.
\end{rem}

We conclude the presentation of our results by a non-blowup criterion for 
the initial value problem \eqref{eq1}--\eqref{eq1-ini} with suitable strongly singular kernels.

\begin{theorem}[Strongly singular kernels]\label{strongly-global}
Let $n \ge 1$. 
Assume that $\nabla K \in L^{q^\prime}(\R^n)$ with
 $q^\prime \in [1, n]$  and
let $q \in [\frac{n}{n-1}, \infty]$ satisfy $1/q
 + 1/q^\prime = 1$.
Suppose, moreover,  that the kernel $K$ can be decomposed into two parts,
$
 K = K_1 + K_2,
$
where $\Delta K_1$ is  
nonnegative (as e.g. a tempered distribution)
 and $\nabla K_2
 \in L^\infty (\R^n)$.  Then, for every $u_0 \in L^1 (\R^n) \cap L^q (\R^n)$
 with $u_0 \ge 0$, the local-in-time solution from Theorem \ref{local-th2} exists, in fact, for all $t > 0$.
\end{theorem}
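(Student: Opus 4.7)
The plan is to derive a priori bounds on $\|u(t)\|_{L^1}$ and $\|u(t)\|_{L^q}$ that remain finite on every bounded time interval; because Theorem \ref{local-th2} produces a local existence time that depends monotonically on $\|u_0\|_{L^1}$, $\|u_0\|_{L^q}$ and $\|\nabla K\|_{L^{q'}}$, such a priori bounds are enough to continue the local mild solution up to any $t>0$. The regularity and nonnegativity stated in Remark \ref{rem:reg} are in force throughout, so the calculations below are justified on the interior of the lifetime.

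The $L^1$-bound is conservation: since \eqref{eq1} is in divergence form and $u\ge 0$, one has $\frac{d}{dt}\int u\,dx=0$, hence $\|u(t)\|_{L^1}=\|u_0\|_{L^1}=:M$. For finite $q$, multiply \eqref{eq1} by $qu^{q-1}$ and integrate to get
\[
\frac{d}{dt}\|u\|_{L^q}^q = -\frac{4(q-1)}{q}\|\nabla u^{q/2}\|_{L^2}^2-(q-1)\int_{\R^n}u^q\,(\Delta K\ast u)\,dx.
\]
Split $K=K_1+K_2$. The $K_1$-piece $-(q-1)\int u^q(\Delta K_1\ast u)\,dx$ is nonpositive because $\Delta K_1\ast u\ge 0$, being the convolution of a nonnegative tempered distribution with a nonnegative function. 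For the $K_2$-piece, integrate by parts back to obtain $2(q-1)\int u^{q/2}\nabla(u^{q/2})\cdot(\nabla K_2\ast u)\,dx$, use the pointwise bound $\|\nabla K_2\ast u\|_{L^\infty}\le\|\nabla K_2\|_{L^\infty}M$, and apply Cauchy-Schwarz with Young's inequality to absorb a suitable multiple of $\|\nabla u^{q/2}\|_{L^2}^2$ into the diffusion term. The result is a linear differential inequality
\[
\frac{d}{dt}\|u\|_{L^q}^q\le C\,\|u\|_{L^q}^q
\]
with $C=C(q,M,\|\nabla K_2\|_{L^\infty})$, and Gronwall yields a locally bounded estimate for $\|u(t)\|_{L^q}$.

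The endpoint $q=\infty$ (which is forced in dimension $n=1$) cannot be treated by multiplying by $u^{q-1}$ directly, so a bootstrap is needed. Since $u_0\in L^1\cap L^\infty\subset L^p$ for every finite $p$, the previous step supplies an a priori $L^p$-bound on any subinterval of the lifetime. Feeding this into the mild formulation \eqref{intro-eq1}, using the smoothing estimate $\|\nabla G(t)\ast f\|_{L^\infty}\le Ct^{-(1+n/p)/2}\|f\|_{L^p}$ (valid for $p>n$, hence for any $p>1$ when $n=1$), and splitting $\nabla K=\nabla K_1+\nabla K_2$ once more to control the nonlinear term, closes a Gronwall-type inequality for $\|u(t)\|_{L^\infty}$.

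The principal technical point is to make $-(q-1)\int u^q(\Delta K_1\ast u)\,dx\le 0$ rigorous when $\Delta K_1$ is only a nonnegative distribution and not a locally integrable function. I would resolve this by regularization: for a smooth, radial, nonnegative approximate identity $\rho_\varepsilon$, set $K_1^\varepsilon=K_1\ast\rho_\varepsilon$, so that $\Delta K_1^\varepsilon=\rho_\varepsilon\ast\Delta K_1$ is a nonnegative $C^\infty$ function and the estimate is classical for the equation with kernel $K^\varepsilon:=K_1^\varepsilon+K_2$. One then passes to the limit $\varepsilon\to 0$ using the Lipschitz dependence of the mild solution on $\nabla K$ furnished by the fixed-point argument of Theorem \ref{local-th2} in the space $\Y_T$; the constant in the a priori estimate depends on $M$ and $\|\nabla K_2\|_{L^\infty}$ alone, so it is uniform in $\varepsilon$ and the bound survives the limit.
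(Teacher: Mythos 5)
Your proof is correct and follows essentially the same route as the paper's: conservation of mass, the $L^q$ energy identity in which the $\Delta K_1$-term has a sign and the $K_2$-term is absorbed into the diffusion by an $\varepsilon$-Young inequality using $\|\nabla K_2\ast u\|_{L^\infty}\le\|\nabla K_2\|_{L^\infty}\|u_0\|_{L^1}$, then Gronwall and continuation. The only additions are that you spell out the endpoint $q=\infty$ via a bootstrap in the mild formulation and make the mollification of $K_1$ explicit, two points the paper dispatches silently or with a one-line appeal to ``a standard regularization procedure.''
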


\begin{rem}
Let $n=2$.
The function  $K$ defined by the Fourier transform as
\begin{equation*}
 \widehat{K}(\xi) = \frac{-1}{|\xi|^2 + 1}\label{ss-glob-3}
\end{equation*}
is an example of a strongly singular kernel satisfying the assumptions of Theorem  \ref{strongly-global}
(this is the Bessel potential, discussed just below  \eqref{PKS1}--\eqref{PKS2}, with the reverse sing).
Indeed, using the decomposition
\begin{equation*}
 \widehat{K}(\xi) =\widehat{K_1}(\xi) +\widehat{K_2}(\xi)
\equiv \frac{-1}{|\xi|^2}+ \frac{1}{|\xi|^2(|\xi|^2 + 1)},
\end{equation*}
we see that $\Delta K_1$ is the Dirac delta and $\nabla K_2\in L^\infty(\R^2)$, because $\widehat{\nabla K_2}\in L^1(\R^2)$.
In other words, the two dimensional initial value problem for the parabolic-elliptic system 
\eqref{PKS1}--\eqref{PKS2}, where the sign ``$-$'' in the first equation
is replaced  by ``$+$'', is globally wellposed for any nonnegative initial condition from $L^1(\R^2)\cap L^q(\R^2)$ for some $q\in (2,\infty]$.
\end{rem}

The result from Theorem \ref{strongly-global} on the global-in-time existence of nonnegative solutions is far from 
being optimal. We have stated it here to emphasize the important role of the sing of a strongly singular kernel $K$
in the blowup phenomenon described by Theorem~\ref{blow-th1}.


\section{Construction of local-in-time solutions}

As usual, a solution to the initial value problem \eqref{eq1}--\eqref{eq1-ini} is obtained as a fixed point of the integral equation \eqref{intro-eq1}.
Here, it is  convenient to apply  the following abstract approach proposed by  Meyer \cite{Meyer}.
\begin{lemma}\label{sec1-lem0}
Let $(\X, \| \cdot \|_\X )$ be a Banach space, $y\in\X$,  and $B:\X
 \times \X \rightarrow \X$ be a bilinear form satisfying
$
 \|B(x_1 , x_2)\|_\X \le C \|x_1 \|_\X \|x_2 \|_\X 
$
with a
positive constant $C$ and all $x_1 , x_2 \in \X$.  
If  
$
 4C\|y\|_\X < 1,
$
 the equation
$ x = y + B(x, x)$
has a solution in $\X$ satisfying  $\|x\|_\X \le 2 \|y\|_\X$. Moreover, the
 solution is unique in the ball $U(0, \frac{1}{2C})\subset\X$.
\end{lemma}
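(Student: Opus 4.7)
The plan is to apply the Banach contraction principle to the mapping $T:\X\to \X$ defined by $T(x)=y+B(x,x)$, with a judicious choice of an invariant closed ball.

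First, set $R=2\|y\|_\X$ and let $\bar U_R=\{x\in\X:\|x\|_\X\leq R\}$. I will check that $T$ maps $\bar U_R$ into itself: for $x\in\bar U_R$, the bilinear estimate and the hypothesis $4C\|y\|_\X<1$ give
\[
\|T(x)\|_\X\leq \|y\|_\X+C\|x\|_\X^2\leq \|y\|_\X+CR^2=\|y\|_\X(1+4C\|y\|_\X)<2\|y\|_\X=R.
\]
Next, I would verify that $T$ is a strict contraction on $\bar U_R$. Using bilinearity,
\[
T(x_1)-T(x_2)=B(x_1,x_1)-B(x_2,x_2)=B(x_1-x_2,x_1)+B(x_2,x_1-x_2),
\]
whence
\[
\|T(x_1)-T(x_2)\|_\X\leq C(\|x_1\|_\X+\|x_2\|_\X)\|x_1-x_2\|_\X\leq 2CR\,\|x_1-x_2\|_\X=4C\|y\|_\X\,\|x_1-x_2\|_\X,
\]
and the coefficient $4C\|y\|_\X$ is strictly less than $1$ by assumption. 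The Banach fixed point theorem then furnishes a unique fixed point $x\in\bar U_R$, which automatically satisfies $\|x\|_\X\leq 2\|y\|_\X$.

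For the second statement, uniqueness in the (possibly much larger) ball $U(0,\tfrac{1}{2C})$, I would argue directly: if $x_1,x_2$ both solve $x=y+B(x,x)$ and both lie in $U(0,\tfrac{1}{2C})$, then the same telescoping identity yields
\[
\|x_1-x_2\|_\X\leq C(\|x_1\|_\X+\|x_2\|_\X)\|x_1-x_2\|_\X<C\cdot\tfrac{1}{C}\,\|x_1-x_2\|_\X=\|x_1-x_2\|_\X,
\]
which forces $x_1=x_2$.

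Honestly there is no real obstacle here; the whole difficulty is bookkeeping the constants so that the invariance radius $R=2\|y\|_\X$ is compatible with the contraction condition and with the uniqueness radius $\tfrac{1}{2C}$. The hypothesis $4C\|y\|_\X<1$ is precisely what makes $2\|y\|_\X<\tfrac{1}{2C}$, which simultaneously ensures invariance, contraction on $\bar U_R$, and consistency of the solution found with the uniqueness claim in the larger ball.
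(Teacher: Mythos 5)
Your argument is correct and is precisely the ``direct consequence of the Banach fixed point theorem'' that the paper invokes without writing out: invariance and contraction of $x\mapsto y+B(x,x)$ on the closed ball of radius $2\|y\|_\X$, plus the telescoping identity $B(x_1,x_1)-B(x_2,x_2)=B(x_1-x_2,x_1)+B(x_2,x_1-x_2)$ for uniqueness in the open ball $U(0,\tfrac{1}{2C})$. Nothing to add.
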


We skip the proof of Lemma \ref{sec1-lem0} which is a direct
consequence of the Banach fixed point theorem.

To prove Theorems \ref{local-th1} and
\ref{local-th2},
we are going to apply Lemma \ref{sec1-lem0} 
to the ``quadratic'' equation  \eqref{intro-eq1}, 
written in the form
$
 u( t) = G(\cdot , t)\ast u_0 + B(u, u)(t),
$
with the bilinear form 
\begin{align}
B(u, v) (t) = - \int_0^t \nabla G(\cdot , t-s)* \left(u(\nabla K \ast
 v)\right)(s)\, ds \label{intro-eq2} 
\end{align}
defined on a suitable Banach space. In our reasoning, we use the following well-known estimates 
of the heat kernel which 
are the immediate consequence of the
Young inequality for the convolution:
\begin{align}
\norm{{G(\cdot , t)\ast f}}{p} \le C t^{- \frac{n}{2}\left( \frac{1}{q}-\frac{1}{p} \right)}\norm{f}{q}, \label{intro-eq3}\\
\norm{{\nabla G(\cdot ,t)\ast f}}{p} \le C t^{- \frac{n}{2}
 \left(\frac{1}{q}-\frac{1}{p} \right)- \frac{1}{2}}\norm{f}{q} \label{intro-eq4}
\end{align}
for every $1 \le q \le p \le +\infty$, each $f \in L^q (\R^n)$, and   $C=C(p,q)$ independent of $t,f$.
Notice that $C=1$ in inequality \eqref {intro-eq3} for $p=q$ because $\|G(\cdot,t)\|_{L^1}=1$ for all $t>0$.

\begin{proof}[Proof of Theorem \ref{local-th1}]
First,  we observe  that for every $q^\prime \in (n, +\infty]$, the relation  $1/q + 1/q^\prime = 1$ implies
 $q\in \big[1,\frac{n}{n-1}\big)$.

Here, we use Lemma~\ref{sec1-lem0} with 
the set
$
\X\equiv  \X_T= C([0, T],\ L^1 (\R^n)) \cap C((0, T],\ L^q (\R^n))
$
which is a Banach space 
with the norm $\|u\|_{\X_T} \equiv \sup_{0 \le t\leq T} \|u\|_{L^1} + \sup_{0 <
 t\leq T} \left(t^{\frac{n}{2}(1-\frac{1}{q})}\|u\|_{L^q} \right)$.

By inequality \eqref{intro-eq3},  we immediately obtain
\[
 \|G(\cdot , t)\ast u_0 \|_{L^1} \le \|u_0 \|_{L^1} \quad \text{and}
 \quad t^{\frac{n}{2}(1-\frac{1}{q})} \|G(\cdot , t)\ast u_0 \|_{L^q} 
\le C(q,1) \|u_0 \|_{L^1}
\]
for every  $u_0 \in L^1 (\R^n)$,
hence, $y\equiv G(\cdot , t)\ast u_0 \in \X_T$
with $\|y\|_{\X_T}\leq (1+C(q,1))\|u_0\|_{L^1}$.

Next, we show that 
the bilinear operator defined in \eqref{intro-eq2} satisfies $ B:\X_T
 \times \X_T \rightarrow \X_T$ and there exists a constant $C_1 > 0$ such that for all $T > 0$ and all $u,v\in\X_T$ 
 we have
\begin{equation}
\|B(u, v)\|_{\X_T} \le C_1 T^{\frac{1}{2}\big(1-n(1-\frac{1}{q})\big)} \|\nabla K\|_{L^{q^\prime}} \|u\|_{\X_T}
 \|v\|_{\X_T}.\label{sec1-eq3}
\end{equation}

Assume that $u, v \in \X_T$. First, we compute the $L^1$-norm of $B(u, v)(t)$. By inequalities
 \eqref{intro-eq3} and \eqref{intro-eq4} combined with the 
H\"older inequality and the Young inequality, we have
\begin{align*}
\|B(u, v)(t)\|_{L^1} &\le \int_0^t \|\nabla G(\cdot, t-s) *
 \big(u(\nabla K \ast v)\big)(s)\|_{L^1}\, ds \\
& \le C\int_0^t (t-s)^{-1/2} \|u(\nabla K \ast v)(s)\|_{L^1}\, ds \\
&\le C\int_0^t (t-s)^{-1/2} \|u (s)\|_{L^q} \|\nabla
 K\|_{L^{q^\prime}} \|v(s)\|_{L^1}\, ds \\
&\le C\|\nabla K\|_{L^{q^\prime}} \left(\sup_{0 < s < T}
 s^{\frac{n}{2}(1-\frac{1}{q})}\|u (s)\|_{L^q}\right)\left(\sup_{0 < s
 < T}\|v(s)\|_{L^1}\right)\\
&\quad \times \int_0^t
 (t-s)^{-1/2}s^{-\frac{n}{2}(1-\frac{1}{q})}\, ds \\
&\le C\|\nabla K\|_{L^{q^\prime}} \|u\|_{\X_T}\|v\|_{\X_T} \int_0^t
 (t-s)^{-1/2}s^{-\frac{n}{2}(1-\frac{1}{q})}\, ds,
\end{align*}
where $C$ is a positive constant. Here, notice that
 $-\frac{n}{2}\left(1-\frac{1}{q}\right) > -1$ because $q
 \in[1,\frac{n}{n-1})$, consequently,  
\[
 \int_0^t
 (t-s)^{-1/2}s^{-\frac{n}{2}(1-\frac{1}{q})}\, ds =
 t^{\frac{1}{2}\big(1-n(1-\frac{1}{q})\big)}
 \mathcal{B}\left(1-\frac{n}{2}\left(1-\frac{1}{q}\right),\ \frac{1}{2}\right),
\]
where $\mathcal{B}$ denotes the beta function. Therefore, we obtain
\begin{align}
\sup_{0 < t\leq T}\|B(u, v)\|_{L^1} \le C T^{\frac{1}{2}\big(1-n(1-\frac{1}{q})\big)}\|\nabla K \|_{L^{q^\prime}}
 \|u \|_{\X_T}\|v\|_{\X_T}.\label{sec1-eq1}
\end{align}
where $\frac{1}{2}\big(1-n(1-\frac{1}{q})\big)> 0$.

To deal with the $L^q$-norm of $B(u, v)(t)$, we proceed similarly:
\begin{align*}
t^{\frac{n}{2}(1-\frac{1}{q})}&\|B(u, v) (t)\|_{L^q} \\
&\le
C t^{\frac{n}{2}(1-\frac{1}{q})} \int_0^t (t-s)^{-1/2}\|u (s)\|_{L^q}
 \|\nabla K\|_{L^{q^\prime}} \|v(s)\|_{L^q}\, ds \\
&\le C t^{\frac{n}{2}(1-\frac{1}{q})}\|\nabla K\|_{L^{q^\prime}}
 \left(\sup_{0 \le s < T} s^{\frac{n}{2}(1-\frac{1}{q})} \|u(s)\|_{L^q}
 \right)\left(\sup_{0 \le s < T} s^{\frac{n}{2}(1-\frac{1}{q})}
 \|v(s)\|_{L^q} \right) \\
&\quad \times \int_0^t (t-s)^{-1/2} s^{-n(1-\frac{1}{q})}\, ds \\
&\le C t^{\frac12-\frac{n}{2}(1-\frac{1}{q})}\|\nabla K\|_{L^{q^\prime}}\cB
 \left(1-n\left(1-\frac{1}{q}\right),\ \frac{1}{2}\right)\|u\|_{\X_T} \|v\|_{\X_T}.
\end{align*} 
Hence, we have
\begin{align}
\sup_{0 \le t\leq T} t^{\frac{n}{2}(1-\frac{1}{q})}\|B(u, v) (t)\|_{L^q} \le
C T^{\frac{1}{2}\big(1-n(1-\frac{1}{q})\big)} \|\nabla K\|_{L^{q^\prime}} \|u \|_{\X_T} \|v\|_{\X_T}.\label{sec1-eq2}
\end{align}

Estimates \eqref{sec1-eq1} and \eqref{sec1-eq2} imply  that
 the bilinear form $B$ satisfies \eqref{sec1-eq3}.
Hence, it follows from Lemma \ref{sec1-lem0}  that if we chose $T > 0$ so small that 
\begin{align}
 4C_1 T^{\frac{1}{2}\big(1-n(1-\frac{1}{q})\big)} \|\nabla
 K\|_{L^{q^\prime}}\|u_0 \|_{L^1} (1 + C(q,1))
 < 1, \label{sec1-eq5}
\end{align}
then there exists a solution in the space $\X_T$ with $\|u \|_{\X_T} \le
 2 \norm{u_0}{1} (1 + C(q,1))$. 

By Lemma \ref{sec1-lem0}, this is the unique solution in the ball
 $U(0, \frac{1}{2C})$ with the constant  $C = C_1 T^{\frac{1}{2}\big(1-n(1-\frac{1}{q})\big)} \|\nabla
 K\|_{L^{q^\prime}}$. However, using a standard argument based on the Gronwall lemma combined with the estimates
 leading to \eqref{sec1-eq1} and \eqref{sec1-eq2}, one can show that this is the unique solution in the whole space
$  \X_T$. This completes the proof.
\end{proof}

\begin{proof}[Proof of Theorem \ref{local-th2}]
Now, we assume that $q^\prime
 \in [1, n]$ and we apply Lemma \ref{sec1-lem0}  in the space
 $
 \X\equiv \Y_T= C([0, T],\ L^1 (\R^n)) \cap C([0, T],\ L^q (\R^n))
$
 supplemented with the norm $\|u\|_{\Y_T} \equiv \sup_{0 \le t\leq T} \|u\|_{L^1} + \sup_{0 \le
 t\leq T} \|u\|_{L^q}$. 

By inequality \eqref{intro-eq3}, it is clear that 
\[
\|G(\cdot , t)\ast u_0 \|_{L^1} \le \|u_0 \|_{L^1} \quad \text{and}\quad 
\|G(\cdot , t)\ast u_0 \|_{L^q} \le \|u_0 \|_{L^q}
\]
for every $u_0 \in L^1 (\R^n) \cap L^q (\R^n)$. These inequalities imply that $y\equiv G(\cdot ,
 t)\ast u_0 \in \Y_T$ and $\|G(\cdot ,
 t)\ast u_0 \|_{\Y_T} \le \|u_0 \|_{L^1}+\|u_0 \|_{L^q}$.

Next, for $u, v \in \Y_T$, we see that 
\begin{align*}
\|B(u, v)(t)\|_{L^1} &\le C \int_0^t (t-s)^{-1/2}\|u(s)\|_{L^q} \|\nabla
 K\|_{L^{q^\prime}} \|v(s)\|_{L^1}\, ds \\
&\le C T^{1/2}\|\nabla
 K\|_{L^{q^\prime}} \|u\|_{\Y_T} \|v\|_{\Y_T},
\end{align*}
where $C$ is a positive constant.  In a similar way, we show the following 
$L^q$-estimate
\begin{align*}
\|B(u, v)(t)\|_{L^q} &\le C \int_0^t (t-s)^{-1/2} \|u(\nabla K \ast
 u)(s)\|_{L^q}\, ds \\
&\le C \int_0^t (t-s)^{-1/2}\|u(s)\|_{L^q} \|\nabla
 K\|_{L^{q^\prime}} \|v(s)\|_{L^q}\, ds \\
&\le C T^{1/2}\|\nabla
 K\|_{L^{q^\prime}} \|u\|_{\Y_T} \|v\|_{\Y_T}.
\end{align*}

Summing up these inequalities, we obtain 
\begin{align}
\|B(u, v)\|_{\Y_T} \le C \sqrt{T}\|\nabla
 K\|_{L^{q^\prime}} \|u\|_{\Y_T} \|v\|_{\Y_T}. \label{sec1-eq100}
\end{align}
Therefore, by Lemma \ref{sec1-lem0}, if we
 chose $T > 0$ such that 
\begin{align}
4C \sqrt{T} \|\nabla K \|_{L^{q^\prime}}(\|u_0 \|_{L^1}+\|u_0
 \|_{L^q}) < 1, \label{sec1-eq6}
\end{align}
then we obtain a local-time solution in the space $\Y_T$ which is unique in the open ball
 $U\big(0,  (2C\sqrt{T} \|\nabla K \|_{L^{q^\prime}})^{-1}\big)$.
 However, similarly as in the proof of Theorem \ref{local-th1}, using an argument involving the Gronwall lemma,
 we can show that this is the unique solution in the whole space $\Y_T$.
\end{proof}


\section{Construction of global-in-time solutions}

\begin{proof}[Proof of Theorem \ref{global-th1}]
We are going to show that any nonnegative local-in-time mild solution $u=u(x,t)$ constructed in Theorem
\ref{local-th1} exists, in fact, on every time interval $[0,T]$. 

First, we note that the condition $u_0(x)\geq 0$ implies $u(x,t)\geq 0$ for all $x\in\R^n$ and $t\geq 0$.
Next, integrating equation \eqref{intro-eq1} with respect to $x$, using the Fubini theorem, and the identities
$$
\int_{\R^n}G(x,t)\,dx=1 \quad \mbox{and}\quad \int_{\R^n}\nabla G(x,t)\,dx=0\quad \mbox{for all}\quad t>0,
$$
we obtain the conservation of the $L^1$-norm of nonnegative solutions:
\begin{equation}\label{M:cons}
\|u(t)\|_{L^1}= \int_{\R^n}u(x,t)\,dx=\int_{\R^n}u_0(x)\,dx=\|u_0\|_{L^1}.
\end{equation}
By this reason, the local existence time $T=T(\|u_0\|_{L^1}, \|\nabla K\|_{L^{q'}})$
from Theorem \ref{local-th2} does not change for all nonnegative $u_0\in L^1(\R^n)$ with the same $L^1$-norm.
 For now on,  it suffices to follow a standard procedure which consists in applying repetitiously Theorem~\ref{local-th1}
to equation \eqref{eq1} supplemented with the initial datum $u(x,kT)$ to obtain a unique solution on the interval $[kT, (k+1)T]$
for every $k\in \mathbb{N}$.
This completes the proof of Theorem \ref{global-th1}.
\end{proof}

Next, we deal with strongly singular kernels from the space $L^{q',\infty}(\R^n)$ with $1<q'\leq n$. 
The following lemma plays 
an important role in the proof of Theorem \ref{global-th2}. 
\begin{lemma}\label{sec4-lem1}
Assume that $\nabla K\in L^{q',\infty}(\R^n)$ with $1<q'\leq n$.
For every $r,p\in (1,\infty)$ satisfying
\begin{align}
 \frac{1}{r}= \frac{2}{p} + \frac{1}{q'} - 1. \label{sec4-eq4}
\end{align}
 there is a positive number $C=C(r,p,n,q', \|\nabla K\|_{L^{q',\infty}})$
such that 
for all 
 $u, v \in L^p (\R^n)$  we have
\begin{align}
\|u (\nabla K \ast v)\|_{L^r} \le C\norm{u}{p}\norm{v}{p}.\label{sec4-eq3}
\end{align}
\end{lemma}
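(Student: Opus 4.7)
The approach is to split the estimate into two elementary steps, combining H\"older's inequality with the weak Young inequality \eqref{HLS} that was recalled just before Theorem~\ref{global-th2}. The key observation is that the exponent relation \eqref{sec4-eq4} decomposes naturally as the sum of the exponent relation for H\"older (applied to a product) and the exponent relation required by \eqref{HLS} (applied to a convolution).

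Concretely, I would introduce an auxiliary exponent $k\in(1,\infty)$ defined by $\tfrac{1}{k}=\tfrac{1}{p}+\tfrac{1}{q'}-1$, so that $p,q',k$ satisfy exactly the hypothesis of the weak Young inequality \eqref{HLS}. Then I would apply H\"older's inequality with the pair $(p,k)$:
\[
\|u(\nabla K\ast v)\|_{L^r}\;\le\;\|u\|_{L^p}\,\|\nabla K\ast v\|_{L^k},
\]
using that by construction $\tfrac{1}{p}+\tfrac{1}{k}=\tfrac{2}{p}+\tfrac{1}{q'}-1=\tfrac{1}{r}$, i.e.\ relation~\eqref{sec4-eq4}. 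Next I would invoke \eqref{HLS} to obtain
\[
\|\nabla K\ast v\|_{L^k}\;\le\;C\,\|\nabla K\|_{L^{q',\infty}}\,\|v\|_{L^p},
\]
with a constant $C=C(n,k,p,q')$. Chaining the two estimates yields inequality \eqref{sec4-eq3}.

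The only genuinely delicate point, and what I would present as the step requiring care, is the verification that the auxiliary exponent $k$ lies in the admissible range $(1,\infty)$ for the weak Young inequality \eqref{HLS}. The upper bound $k<\infty$, i.e.\ $\tfrac{1}{p}+\tfrac{1}{q'}>1$, has to hold, and it follows implicitly from the hypothesis that $r\in(1,\infty)$ satisfies \eqref{sec4-eq4}; indeed, $\tfrac{1}{k}=\tfrac{1}{r}-\tfrac{1}{p}$, so requiring $k>0$ is the same as $\tfrac{1}{r}>\tfrac{1}{p}$, which is forced by the constraints $r,p\in(1,\infty)$ together with the specific form of \eqref{sec4-eq4} and the standing assumption $q'\le n$. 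The bound $k>1$, equivalent to $\tfrac{1}{p}+\tfrac{1}{q'}<2$, is automatic since $p,q'>1$.

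No other ingredient is needed: the proof is essentially a bookkeeping of exponents, and there is no loss in the constants beyond those produced by \eqref{HLS}. I would keep the write-up short, emphasizing that the role of the weak-type space $L^{q',\infty}(\R^n)$ is precisely to allow the critical convolution bound for potentials like $|\cdot|^{-n/q'}$ (see~\eqref{imbed}), which the classical Young inequality cannot handle.
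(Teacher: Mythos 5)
Your argument is exactly the paper's proof: H\"older's inequality with the pair $(p,k)$, where $\tfrac1k=\tfrac1p+\tfrac1{q'}-1$, followed by the weak Young inequality \eqref{HLS}. The one caveat is your claim that $k>0$ (i.e.\ $\tfrac1p+\tfrac1{q'}>1$) is forced by the stated hypotheses --- it is not (e.g.\ $n=q'=2$, $p=3$, $r=6$ satisfy \eqref{sec4-eq4} with $\tfrac1p+\tfrac1{q'}=\tfrac56$), but the paper's own proof silently makes the same implicit assumption, which does hold for the exponents actually used in Theorem~\ref{global-th2} because of \eqref{sec4-eq1}.
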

\begin{proof}
First, one should use the H\"older inequality to estimate
$$
\|u (\nabla K \ast v)\|_{L^r} \le C\norm{u}{p}
\norm{\nabla K*v}{k}\quad \mbox{with} \quad  \frac{1}{r}=\frac{1}{p}+\frac{1}{k}.
$$
Next, we apply the weak Young  inequality \eqref{HLS} which
 leads to $\nabla K \ast v \in L^k (\R^n)$ with
$
 {1}/{k} = {1}/{p} + {1}/{q'} - 1.
$
\end{proof}

\begin{proof}[Proof of Theorem \ref{global-th2}]
Recall that $q_*=1/(1+1/n-1/q')$.
For an exponent $p$ satisfying
\begin{align}
1 \le  
\max\left\{q_*,
\frac{1}{1-1/(2q')}\right\}  < p <  \frac{1}{1-1/q'+1/(2n)} \label{sec4-eq1},
\end{align}
we define the Banach space
\[
 \X = C([0, +\infty), L^{\astq}(\R^n)) \cap \left\{C([0, +\infty), L^p
 (\R^n)) \mid \sup_{t > 0}
 t^{\frac{n}{2}\left(\frac{1}{\astq} -
 \frac{1}{p}\right)}\norm{{u(t)}}{p} < +\infty \right\}
\]
with the norm $\|u\|_{\X} \equiv \sup_{t > 0}\norm{{u(t)}}{{\astq}} + \sup_{t > 0}
 t^{\frac{n}{2}\left(\frac{1}{\astq} -
 \frac{1}{p}\right)}\norm{{u(t)}}{p}$. %

For every $u_0 \in L^{{\astq}} (\R^n)$, it follows immediately from
 estimates \eqref{intro-eq3} that 
\begin{align}
 \|G(\cdot)*u_0 \|_{\X} \le C_3 \norm{{u_0}}{{\astq}} \label{sec4-eq100}
\end{align}
for some constant $C_3 > 0$. 

In the next step, we estimate the bilinear form $B(u,v)$ defined in \eqref{intro-eq2}
for any $u, v \in \X$.
By estimates \eqref{intro-eq4} and \eqref{sec4-eq3}, we have
\begin{align}
\| B(u, v)(t)\|_{L^{\astq}} &\le C \int_0^t
 (t-s)^{-\frac{n}{2}\left(\frac{1}{r}-\frac{1}{\astq}\right)-\frac{1}{2}}
 \|u(\nabla K \ast v)(s)\|_{L^r} \, ds \nonumber\\
&\le C \int_0^t
 (t-s)^{-\frac{n}{2}\left(\frac{1}{r}-\frac{1}{\astq}\right)-\frac{1}{2}}
 \|u(s)\|_{L^p} \|v(s) \|_{L^p} \, ds \label{sec4-eq5}\\
&\le C \|u\|_{\X} \|v\|_{\X} \int_0^t
 (t-s)^{-\frac{n}{2}\left(\frac{1}{r}-\frac{1}{\astq}\right)-\frac{1}{2}}
 s^{-n \left(\frac{1}{\astq} - \frac{1}{p}\right)}\, ds, \nonumber
\end{align}
where $r$ is defined in \eqref{sec4-eq4}. 
Inequalities in \eqref{sec4-eq5} make sense and involve convergent integrals because, by a direct calculation, 
it follows from
 \eqref{sec4-eq1} that 
\[
1<r\leq q^*, \quad 
 -\frac{n}{2}\left(\frac{1}{r} - \frac{1}{\astq}\right)-\frac{1}{2} > -1,
 \quad \text{and}\quad -n\left(\frac{1}{\astq} - \frac{1}{p}\right) > -1.
\]
Therefore, after changing the variable on the right-hand side of
 \eqref{sec4-eq5}, we see that 
\begin{align*}
&\| B(u, v)(t)\|_{L^{\astq}} \\
&\le C \|u\|_{\X} \|v\|_{\X}
 t^{-\frac{n}{2}\left(\frac{1}{r}-\frac{1}{\astq}\right)-\frac{1}{2} -
 n\left(\frac{1}{\astq} - \frac{1}{p}\right) + 1}\cB
 \left(1-n\left(\frac{1}{\astq} - \frac{1}{p}\right),
 \frac{1}{2}\left(1-n\left(\frac{1}{r}- \frac{1}{\astq}\right)\right)\right),
\end{align*}
where $\cB$ denotes the beta function.
However, for $q^*$ defined by \eqref{q*}, it follows from relation \eqref{sec4-eq4} that %
\[
 -\frac{n}{2}\left(\frac{1}{r}-\frac{1}{\astq}\right)-\frac{1}{2} -
 n\left(\frac{1}{\astq} - \frac{1}{p}\right)+ 1 =0,
\]
hence,
the $L^{\astq}$-norm of $B(u,v)$ is estimated as
\begin{align}
\sup_{t > 0}\| B(u, v)(t)\|_{L^{\astq}} \le C \|u\|_{\X} \|v\|_{\X} \label{sec4-eq6}
\end{align}
with a  positive constant $C$.

By  similar arguments as those in the case of the  $L^{\astq}$-estimate, we obtain
\begin{align*}
&t^{\frac{n}{2}\left(\frac{1}{\astq}-\frac{1}{p}\right)}\|B(u,
 v)(t)\|_{L^p} \nonumber\\
&\le C \|u\|_{\X} \|v\|_{\X} t^{\frac{n}{2}\left(\frac{1}{\astq}-\frac{1}{p}\right)}\int_0^t
 (t-s)^{-\frac{n}{2}\left(\frac{1}{r}-\frac{1}{p}\right)-\frac{1}{2}}
 s^{-n \left(\frac{1}{\astq} - \frac{1}{p}\right)}\, ds \nonumber \\
&= C \|u\|_{\X} \|v\|_{\X}.
\end{align*} 
Therefore, there is a constant $C > 0$ independent of $t$ such that 
\begin{align}
\sup_{t > 0} t^{\frac{n}{2}\left(\frac{1}{\astq}-\frac{1}{p}\right)}\|B(u,
 v)(t)\|_{L^p} \le C \|u\|_{\X} \|v\|_{\X} . \label{sec4-eq7}
\end{align}

Finally, it follows from \eqref{sec4-eq6} and \eqref{sec4-eq7} that 
\begin{align}
 \|B(u, v)\|_{\X} \le \eta \|u\|_{\X} \|v\|_{\X} \label{sec4-eq8}
\end{align}
for a positive number $\eta$ independent of $t$, $u$, and $v$.
Hence,
we conclude  by Lemma~\ref{sec1-lem0} that  the equation 
$
 u(t) = G(t)* u_0 + B(u, u)
$ 
has a solution in $\X$ if 
$
 4\eta\|G(t)*u_0 \|_{\X} < 1.
$
However, by
\eqref{sec4-eq100}, it suffices to assume that  
$
 \|u_0 \|_{L^{\astq}} < \frac{1}{4\eta C_3}
$
to complete the proof of Theorem~\ref{global-th2}.
\end{proof}


\section{Nonexistence of global-in-time solutions}

\begin{proof}[Proof of Theorem \ref{blow-th1}]
Let us recall that we limit ourselves to nonnegative solutions to \eqref{eq1}--\eqref{eq1-ini} satisfying 
$$
M = \int_{\R^n} u(x, t)\, dx = \int_{\R^n} u_0 (x)\, dx \quad\mbox{for all} \quad t\in [0,T].
$$ 
As a standard practice, we
 study the evolution of the second moment of a solution to \eqref{eq1}--\eqref{eq1-ini}
\[
 I(t) = \int_{\R^n} |x|^2 u (x, t)\, dx.
\]
Here, we skip the well-known argument (see~{\it e.g.}~\cite{Kozono-Sugiyama}) saying that the quantity $I(t)$ is
finite if $u_0\in L^1\big(\R^n, (1+|x|^2)\,dx\big)$.

Differentiating the function $I(t)$ with respect to $t$,
using equation \eqref{eq1}, and integrating by parts,
 we obtain
\begin{align}
\frac{d}{dt}I(t) &= \int_{\R^n} |x|^2 \left(\Delta u - \nabla \cdot (u
 (\nabla K \ast u))\right)\, dx \nonumber\\
& = 2n M + 2 \int_{\R^n} x \cdot u (\nabla K \ast u)\, dx \nonumber\\
&= 2nM + 2 \int_{\R^n} \int_{\R^n} u (x, t) u(y, t) x \cdot \nabla
 K(x-y)\, dx dy. \label{sec5-eq100}
\end{align}
Symmetrizing in $x$ and $y$ the double integral on the right-hand side of
 \eqref{sec5-eq100}, we obtain
\begin{align}
\frac{d}{dt}I(t) = 2nM + \int_{\R^n} \int_{\R^n} u (x, t) u(y, t) \big(x \cdot \nabla
 K(x-y) + y \cdot \nabla K(y-x) \big) \, dx dy. 
\end{align}
Now, notice that the interaction kernel is assumed to be radial, $K(x) = K(|x|)$, hence $\nabla K (x) =
 \dfrac{x}{|x|}K^\prime (r)$, where $r = |x|$. Therefore, we see that 
\begin{align*}
x \cdot \nabla K(x-y) + y \cdot \nabla K(y-x) &= x \cdot
 \frac{x-y}{|x-y|} K^\prime (|x-y|) + y \cdot \frac{y-x}{|y-x|} K^\prime (|y-x|)\\
&= |x-y| K^\prime (|x-y|).
\end{align*}
Now, we apply the assumption ii. imposed on the kernel $K$ as follows
\begin{align*}
\frac{d}{dt} I(t) &= 2nM + \int_{\R^n} \int_{\R^n} u (x, t) u(y, t) |x-y| K^\prime (|x-y|)\,
 dx dy \\
 &\le 2nM - \gamma
  \int\!\!\!\int_{|x-y| \le \delta} u (x, t) u(y, t)\,dx dy + \\
&\quad + \cC \int\!\!\!\int_{|x-y| > \delta} u (x, t) u(y, t) |x-y|^2 \,dx dy \\
&\le 2nM - \gamma  M^2 + (\cC+\gamma/\delta^2)  \int\!\!\!\int_{|x-y| > \delta} u (x, t) u(y, t) |x-y|^2 \,dx dy.
\end{align*}
Hence, using the elementary inequality $|x-y| \le 2\big(|x|^2 + |y|^2\big)$ we obtain
\begin{equation*}\label{sec5-eq1}
\begin{split}
\frac{d}{dt}I(t) &\le 2nM - \gamma  M^2 + 2 (\cC+\gamma/\delta^2) \int_{\R^n}\int_{\R^n} u (x, t) u(y, t) (|x|^2 +
 |y|^2) \,dx dy \\
&= M \big( 2n 
-\gamma  M 
+ 4(\cC+\gamma/\delta^2)
 I(t)
\big), 
\end{split}
\end{equation*}
which implies that 
\begin{equation*}
\frac{d}{dt}I(t)\leq 
M \big( 2n 
-\gamma  M 
+ 4(\cC+\gamma/\delta^2)
 I(0)
\big)
< 0 \quad \mbox{for all}\quad  t > 0 
\end{equation*} provided
$
\gamma M > 2n + 4(\cC+\gamma/\delta^2) I(0). 
$
Consequently, $I(T)=0$ for some $0<T<\infty$. 
This contradicts the global-in-time existence of regular nonnegative solutions of problem \eqref{eq1}--\eqref{eq1-ini}.
\end{proof}

\begin{proof}[Proof of Theorem \ref{strongly-global}]
In order to show that the local-in-time solution from Theorem~\ref{local-th2} exists for all $t\in [0,\infty)$,
it is sufficient to obtain its {\it a priori} $L^q$-estimate. 
Indeed, if $\|u(t)\|_{L^q}$ does not blow up in finite time, we can apply a continuation argument analogously as in 
the proof of Theorem \ref{global-th1}.

Multiplying both sides of equation \eqref{eq1}  by $u^{q-1}$  (recall that $u$ is nonnegative),
integrating  over
 $\R^n$, and using the decomposition of $K$, we have
\begin{align}
\frac{1}{q}\frac{d}{dt}\int_{\R^n} u^q \, dx %
&= \int_{\R^n} u^{q-1}\Delta u\, dx - \int_{\R^n} u^{q-1}\nabla \cdot \left(u
 (\nabla K \ast u )\right)\, dx \nonumber\\
&= -(q-1)\int_{\R^n} u^{q-2}|\nabla u |^2 \, dx + (q-1)\int_{\R^n}
  u^{q-1}\nabla u \cdot (\nabla K_1 \ast u)\, dx \label{ss-glob-1} \\
&\quad + (q-1)\int_{\R^n}
 u^{q-1} \nabla u \cdot (\nabla K_2 \ast u)\, dx.  \nonumber
\end{align}
Notice that
the second term of the right-hand side of \eqref{ss-glob-1} is nonpositive 
 due to the assumptions of $K_1$ 
in  view of the following calculation
\begin{align*}
(q-1)\int_{\R^n}
 u^{q-1} \nabla u \cdot (\nabla K_1 \ast u)\, dx &= \frac{q-1}{q}\int_{\R^n}
 \nabla u^{q} \cdot (\nabla K_1 \ast u)\, dx \\
&= -\frac{q-1}{q}\int_{\R^n}
 u^{q} \cdot (\Delta K_1 \ast u)\, dx
\le 0.
\end{align*}
Here, we have assumed $K_1$ to be sufficiently regular and the more general case can be handled by a standard 
regularization procedure.

Next, 
by the $\varepsilon$-Young inequality, the third term of the right-hand side of \eqref{ss-glob-1} is estimated
 as follows
\begin{align*}
(q-1)\int_{\R^n}&
 u^{q-1} \nabla u \cdot (\nabla K_2 \ast u)\, dx \\
&\ \le
 (q-1)\left[\varepsilon \int_{\R^n} u^{q-2}|\nabla u|^2 \, dx +
 C(\varepsilon) \int_{\R^n} u^q |\nabla K_2 \ast u |^2 \, dx \right]\\
&\ \le \varepsilon (q-1) \int_{\R^n} u^{q-2} |\nabla u |^2 \, dx +
 C(\varepsilon)\|\nabla K_2 \|_{L^\infty}^2 \|u _0\|_{L^1}^2 \int_{\R^n} u^q \, dx
\end{align*}
since, by \eqref{M:cons}, we have 
$
\|\nabla K_2 * u (t)\|_{L^\infty}\leq
\|\nabla K_2 \|_{L^\infty} \|u _0\|_{L^1}
$.

Therefore, coming back to \eqref{ss-glob-1}, for $\varepsilon\leq 1$,  we see that 
\begin{align*}
\frac{1}{q}\frac{d}{dt} \int_{\R^n} u^q \, dx &\le
 -(q-1)(1-\varepsilon)\int_{\R^n} u^{q-2}|\nabla u|^2 \, dx +
 C(\varepsilon)\|\nabla K_2 \|_{L^\infty}^2 \|u _0\|_{L^1}^2 \int_{\R^n} u^q \,
 dx \\
&\le C(\varepsilon)\|\nabla K_2 \|_{L^\infty}^2 \|u_0 \|_{L^1}^2 \int_{\R^n} u^q \, dx.
\end{align*}
Hence, by the Gronwall lemma, 
$
 \|u(t)\|_{L^q} \le e^{C t}\|u_0 \|_{L^q},
$
where $C =  C(\varepsilon)\|\nabla K_2 \|_{L^\infty}^2 \|u_0 \|_{L^1}^2$. This
 implies that $\|u (t)\|_{L^q}$ does not blow up in finite time and the proof of Theorem~\ref{strongly-global}
is complete. 
\end{proof}


%

\end{document}